\documentclass[12pt]{article}
\usepackage{amsmath, amsfonts, amsthm, amssymb, epsfig}

\newtheorem{theorem}{Theorem}
\newtheorem{lemma}{Lemma}

\begin{document}
\title{{\bf Pythagorean triangles within Pythagorean triangles }}
\author{Konstantine Zelator\\
Department of Mathematics\\
301 Thackeray Hall\\
139 University Place\\
The University of Pittsburgh\\
Pittsburgh, PA  15260\\
USA\\
and\\
P.O. Box 4280\\
Pittsburgh, PA  15203\\
kzet159@pitt.edu\\
e-mails: konstantine\underline{\ }zelator@yahoo.com}

\maketitle

\section{Introduction}

\vspace{.25in}

\parbox[b]{1.5in}{Suppose that $CBA$ is a Pythagorean triangle with sidelengths
  $\left| \overline{AB}\right| = c,\ \left|\overline{CA}\right|=b$, and
  $\left|\overline{CB}\right| =a$;  that is, a right triangle with the right
  angle at $C$; and with $a,b,c$ being positive integers such that
  $a^2+b^2=c^2$.  Then (without loss of generality -- $a$ and $b$ may be
  switched),}\hspace{.25in} \hspace{1.0in} \epsfig{file=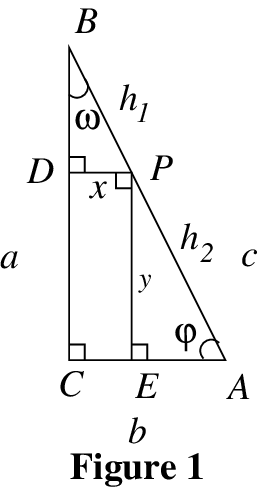,width=1.5in}

\begin{equation}\left\{ \begin{array}{l} a=d(m^2-n^2),\ b=d(2mn),\ c=d(m^2+n^2)\\
\\
{\rm where}\ d,m,n\ {\rm are\ positive\ integers\ such\ that}\\
\\
m > n,\ (m,n) =1,\ {\rm and}\ m+1\equiv 1({\rm mod}\ 2)\end{array} \right\}\label{E1}
\end{equation}

{\bf Note:}  Throughout this paper, $(X,Y)$ will stand for the greatest common
divisor of two integers $X$ and $Y$.

Thus, the condition $(m,n) =1$ says that $m$ and $n$ are relatively prime,
their greatest common divisor is $1$.  Also, the condition $m+n\equiv 1({\rm
  mod}2)$ says that $m$ and $n$ have different parities;  one of them is even,
the other odd.  The formulas in (\ref{E1}), are the well known parametric
formulas describing the entire family of Pythagorean triangles or triples.

A derivation of the formulas can be found in references \cite{1} and
\cite{2}.  For a wealth of historic information on Pythagorean triangles see
\cite{2} or \cite{3}.

Now, consider a point $P$ on the hypotenuse $\overline{AB}$, and let $D$ and
$E$ be the intersection points of the two lines through $P$ and parallel to
$\overline{CA}$ and $\overline{CB}$;  with the sides $\overline{CB}$ and
$\overline{CA}$ respectively.  Two right triangles are formed; the triangles
$BDP$ and $APE$.  Let $x$ and $y$ denote the lengths of line segments
$\overline{DP}$ and $\overline{PE}$ respectively.  Also, let $h_1 =
\left|\overline{BP}\right|$ and $h_2 = \left|\overline{AP}\right|$.  Then,

\begin{equation}
\left\{
\begin{array}{l}
\left|\overline{DP}\right| = \left|\overline{CE}\right| = x\ {\rm and}\
\left|\overline{PE} \right| = \left|\overline{DC}\right| = y.\\
\\
{\rm Thus},\ \left|\overline{BD}\right| = \left|\overline{BC}\right| -
\left|\overline{DC}\right| = a-y;\\ 
\\
{\rm and}\ \left|\overline{AE}\right| = \left|\overline{AC}\right| -
\left|\overline{CE}\right| = b-x \end{array} \right\} \label{E2}
\end{equation}

Both right triangles $BDP$ and $APE$ are similar to the right triangle of
$CBA$.  We have the similarity ratios,

\vspace{.15in}

$\left\{ \begin{array}{ccccc}\dfrac{x}{b} & = & \dfrac{a-y}{a} & = &
  \dfrac{h_1}{c} \\ 
\\
\dfrac{y}{a} & = & \dfrac{b-x}{b} & = & \dfrac{h_2}{c} \end{array}\right\}$
  \hspace{2.75in}   \begin{tabular}{l} (3i)\\ \\ \\ (3ii) \end{tabular}

\vspace{.15in}

\setcounter{equation}{3}

Since $a,b,c$ are (positive) integers, it follows, by inspection, from (3i)
that if one of $x,y$, or $h_1$ is a rational number, then all three of them
must be rational numbers.  Hence, either all three $x,y,h_1$ are rationals, or
otherwise, all three of them must be irrational.  Likewise, it follows from
(3ii) that either all three $x,y,h_2$ are rational or all three are
irrational.  Combining these two observations, we infer that

\vspace{.15in}

\fbox{\parbox{4.5in}{{\it Either all four $x,y,h_1,h_2$ are rational numbers
      or, otherwise, all four of them are irrationals.}}}

\vspace{.15in}

In Section 2, we state three lemmas from number theory.  One of them (Euclid's
Lemma) is well known.  We offer proofs for the other two.  

In Section 3, we prove Theorems 1 and 2; Theorem 2 is a corollary of Theorem
1.  

In Section 4,we consider and analyze three special cases.  These are the cases
when the point $P$ is the midpoint $M$ of the hypotenuse $\overline{AB}$; when
$P$ is the point $I$ where the angle bisector of the $90^{\circ}$ angle at $C$
intersects the hypotenuse $\overline{AB}$, and when the point $P$ is the foot
$F$ of the perpendicular from $C$ to the hypotenuse $\overline{AB}$.  

Back to Section 3.  In Theorem 1 we prove that  the two right triangles $BDP$
and $PEA$ in Figure 1 are either both Pythagorean or neither of them is a
Pythagorean triangle (assuming, of course, that $BCA$ is a Pythagorean
triangle).  It then follows, and this is part of Theorem 2, that when the
triangle $BCA$ is a primitive Pythagorean triangle, neither of the triangles,
$BDP$ and $PEA$ are Pythagorean for any position of the point $P$ along the
hypotenuse $\overline{AB}$.

In Section 5 (Theorem 6), we postulate that given a Pythagorean triangle with
side lengths $a=d(m^2-n^2),\ b=d(2mn)$, and $c=d(m^2+n^2)$, where $d,m,n$ are
positive integers such that $d \geq 2$, $(m,n) =1$, $m > n$, and $m+n \equiv
1({\rm mod}\ 2)$. Then there are exactly $d-1$ positions of the point $P$, such
that triangles $BDP$ and $PEA$ are both Pythagorean.

In Section 6, we will examine the general question of when, in addition to the
two triangles $BDP$ and $APE$ being Pythagorean, the four congruent right
triangles (within the rectangle $CDPE$) $CDP,\ CEP,\ DCE$, and $EPD$ are also
Pythagorean.  We derive a family of non-primitive Pythagorean triangles
$CBA$ with that property.

\noindent{\bf Note:}  In addition to the notation $(k,\ell)$ denoting the
greatest common divisor of two integers, $k$ and $\ell$, the notation $t|v$,
will stand for ``The integer $t$ is a divisor of the integer $v$''.

\section{Three lemmas from number theory}

\begin{lemma} (Euclid's Lemma):  Suppose that $a,b,c$, are natural numbers
  such that $c|ab$ (i.e., $c$ is a divisor of the product $ab$).  If
  $(c,a)=1$, then $c|b$.

\end{lemma}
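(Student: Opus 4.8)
The plan is to derive the conclusion from B\'ezout's identity. Since $(c,a)=1$, the greatest common divisor of $c$ and $a$ equals $1$, and the key fact I would invoke is that the gcd of two integers is expressible as an integer-linear combination of them. Thus there exist integers $u$ and $v$ with $uc+va=1$.

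From here the argument is a short computation. First I would multiply the B\'ezout relation through by $b$ to obtain $ucb+vab=b$. Since by hypothesis $c\mid ab$, I can write $ab=kc$ for some integer $k$, and substituting gives $b=ucb+v(kc)=c(ub+vk)$. The quantity $ub+vk$ is an integer, so $c\mid b$, which is precisely the assertion of the lemma.

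The only substantive step --- and hence the main obstacle --- is justifying B\'ezout's identity itself, i.e. that $(c,a)=1$ forces the existence of integers $u,v$ with $uc+va=1$. I would establish this by the standard route: consider the set $S=\{\,sc+ta : s,t\in\mathbb{Z},\ sc+ta>0\,\}$, which is a nonempty set of positive integers and therefore, by the well-ordering principle, possesses a least element $g$. A division-algorithm argument shows that $g$ divides both $c$ and $a$, so $g$ is a common divisor; conversely, every common divisor of $c$ and $a$ divides each element of $S$, and in particular divides $g$. Hence $g=(c,a)=1$, and since $g\in S$ it is representable as $uc+va$. Once this representation is in hand, the remainder of the proof is the one-line substitution above.
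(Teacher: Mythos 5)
Your proof is correct: the B\'ezout argument ($uc+va=1$, multiply by $b$, substitute $ab=kc$) is valid, and your well-ordering justification of B\'ezout's identity is the standard one, with the division-algorithm step correctly placed as the crux. For comparison, the paper itself offers no proof of this lemma at all --- it simply labels it ``Euclid's Lemma'' and refers the reader to the cited texts of Rosen and Sierpinski --- so there is nothing to measure your argument against except those references, and yours is precisely the classical proof they contain. The only stylistic remark worth making is that, since you were willing to prove B\'ezout from well-ordering, you have in effect given a self-contained proof from first principles, which is more than the paper (or its intended audience) requires; citing B\'ezout as known would have been equally acceptable here.
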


For a proof of this well-known result, the reader
  may refer to \cite{1} or \cite{2}.

\begin{lemma}  Let $m,n$ be positive integers such that $m > n,\ (m,n)=1$, and
  $m+n \equiv 1({\rm mod}2)$.  Then

\begin{enumerate}
\item[(i)]  $(m^2+n^2,2mn) =1$

\item[(ii)]  $(m^2+n^2, m^2-n^2) = 1$

\item[(iii)]  $(m^2-n^2, 2mn)=1$
\end{enumerate}
\end{lemma}

\begin{proof}

\ \ \newline

 \begin{enumerate} 
\item[(i)]  We show that $m^2+n^2$ and $2mn$
    have no prime divisors in common.  If, to the contrary, $p$ were a prime
    divisor of both $m^2+n^2$ and $2mn$, then $p$ would be odd,  since
    $m^2+n^2 \equiv 1({\rm mod}2)$, by virtue of the hypothesis
    $m+n\equiv1({\rm mod}2)$.  Thus, $p|2mn$ implies, since $(p,2) =1$, that
    $p|mn$ (by Lemma 1).  But $p$ is a prime, so $p|mn$ implies that $p$ must
    divide at least one of $m,n$.  If $p|m$, then from $p|m^2+n^2$, it follows
    that $p|n^2$, and so $p|n$.  Thus, $p|m$ and $p|n$ contradicting the
    hypothesis that $(m,n) -1$

\item[(ii)]  A similar argument left to the reader ($p$ must divide the sum of
    $m^2+n^2$ and $m^2-n^2$, and their difference.  Hence, $p|2n^2$ and $p|2m^2$, which since $p$ is odd, eventually implies $p|n$ and $p|m$, a contradiction).

\item[(iii)]  A similar argument as in (i). 
\end{enumerate}

\end{proof}

\section{A theorem and a corollary}

\begin{theorem}  Suppose that $ABC$ is a Pythagorean triangle with the
  right angle at $C$; and with the three sidelengths satisfying the formulas in
  (\ref{E1}), namely $a=d(m^2-n^2),\ b=d(2mn),\ c=d(m^2+n^2)$, where $d,m,n$
  are positive integers such that $m>n,\ (m,n)=1$, and $m+n\equiv 1({\rm
  mod}\ 2)$.

Let $P$ be a point on the hypotenuse $\overline{AB}$, distinct from $A$ and
$B$.  Furthermore, suppose that $D$ is the foot of the perpendicular from $P$
to the side $\overline{CB}$; and $E$ the foot of the perpendicular from $P$ to
the side $\overline{CA}$, as in Figure 1.  Then, either both right triangles
$BDP$ and $APE$ are Pythagorean or neither of them is.

Moreover, if they are both Pythagorean, then the sidelengths\linebreak
$\left|\overline{BD}\right| = a-y,\ \left|\overline{DP}\right| =x$, and
$\left|\overline{BP}\right| = h_1$ of the triangle $BDP$ satisfy the formulas, 

$$ a-y=\delta (m^2-n^2),\ x= \delta(2mn),\ h_1 = \delta(m^2+n^2).
$$

While the sidelengths $\left|\overline{PE}\right|=y,\
\left|\overline{EA}\right| = b-x,\ \left|\overline{PA}\right| = h_2$ of the
triangle $PEA$ satisfy the formulas

$$
y=(d-\delta)(m^2-n^2),\ b - x = (d-\delta)(2mn),\ h_2 = (d-\delta)(m^2+n^2)
$$

\noindent where $\delta$ is a positive integer such that $1 \leq \delta \leq d-1$
\end{theorem}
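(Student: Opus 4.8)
The plan is to work directly with the similarity ratios already recorded in (3i) and (3ii). Writing $r_1 = h_1/c$ and $r_2 = h_2/c$, those relations give
$$ a-y = a\,r_1,\quad x = b\,r_1,\quad h_1 = c\,r_1,\qquad y = a\,r_2,\quad b-x = b\,r_2,\quad h_2 = c\,r_2. $$
The single most useful fact is geometric: since $P$ lies on the segment $\overline{AB}$, we have $h_1 + h_2 = \left|\overline{AB}\right| = c$, hence $r_1 + r_2 = 1$; and since $P \neq A,B$, we have $0 < r_1, r_2 < 1$.

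For the first assertion I would argue by complementarity. If $BDP$ is Pythagorean, then $a-y$, $x$, $h_1$ are positive integers. Because $a,b,c$ are integers, the complementary quantities $y = a - (a-y)$, $b-x = b - x$, and $h_2 = c - h_1$ are then integers as well, and each is positive since $0 < r_2 < 1$. Thus $PEA$ is Pythagorean. The reverse implication is identical with the roles of $r_1$ and $r_2$ interchanged, which establishes the ``both or neither'' dichotomy.

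For the parametric description, assume both triangles are Pythagorean. Then $h_1$ and $c$ are positive integers, so $r_1 = h_1/c$ is rational; write $r_1 = p/q$ in lowest terms, so $(p,q)=1$. Since $a r_1$, $b r_1$, $c r_1$ are integers, we have $q \mid ap$, $q \mid bp$, $q \mid cp$, and because $(p,q)=1$, Euclid's Lemma (Lemma 1) yields $q \mid a$, $q \mid b$, and $q \mid c$. Hence $q$ divides the greatest common divisor of $a$, $b$, and $c$.

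The crux — the step I expect to carry the weight — is to evaluate that common divisor. With $a = d(m^2-n^2)$, $b = d(2mn)$, $c = d(m^2+n^2)$, the greatest common divisor of the three equals $d$ times the greatest common divisor of $m^2-n^2$, $2mn$, $m^2+n^2$; by Lemma 2(iii) the pair $m^2-n^2$, $2mn$ is already coprime, so that inner gcd is $1$ and the common divisor of $a,b,c$ is exactly $d$. Therefore $q \mid d$. Setting $\delta = r_1 d = pd/q$, which is an integer precisely because $q \mid d$, gives $r_1 = \delta/d$ and $r_2 = (d-\delta)/d$. Substituting into the displayed relations produces
$$ a-y = \delta(m^2-n^2),\ x = \delta(2mn),\ h_1 = \delta(m^2+n^2), $$
together with the complementary formulas for $y$, $b-x$, $h_2$ carrying the factor $d-\delta$. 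Finally, $0 < r_1 < 1$ translates into $0 < \delta < d$, i.e. $1 \leq \delta \leq d-1$, which completes the proof.
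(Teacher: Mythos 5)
Your proof is correct, and it reaches the theorem by a genuinely different route than the paper's. The paper works from the single relation $x = \frac{2mn\,h_1}{m^2+n^2}$ (its equation (4)): it invokes Lemma 2(i), $(m^2+n^2,2mn)=1$, together with Euclid's Lemma to conclude $(m^2+n^2)\mid h_1$, sets $h_1=\delta(m^2+n^2)$ with $1\leq\delta\leq d-1$ from $h_1<c$, and then back-substitutes through (3i), (3ii) and (1) to generate all six side formulas; the ``both or neither'' dichotomy falls out only at the end, because the computed sides of $PEA$ turn out integral. You instead do two things differently: first, you dispose of the dichotomy up front by pure complementarity --- $y=a-(a-y)$, $b-x$, and $h_2=c-h_1$ are integers whenever the sides of $BDP$ are, and are positive since $0<r_2<1$ --- which needs no number theory at all; second, you treat the similarity ratio $r_1=h_1/c=p/q$ in lowest terms and show via Euclid's Lemma that $q$ divides each of $a$, $b$, $c$, hence $q\mid\gcd(a,b,c)=d$, where the evaluation $\gcd(a,b,c)=d$ rests on the coprimality of a single pair (your Lemma 2(iii) rather than the paper's 2(i)). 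Both arguments are sound and of comparable length. What yours buys is conceptual transparency: the only arithmetic input is that $(m^2-n^2,\,2mn,\,m^2+n^2)$ is a primitive triple, so any integer-sided right triangle similar to $CBA$ arising in this configuration must be a $(\delta/d)$-scale copy of the primitive one --- a formulation that generalizes immediately to other inscribed-similar-triangle situations. What the paper's version buys is directness: $\delta$ is produced explicitly as the cofactor of $m^2+n^2$ in $h_1$, with no detour through lowest-terms fractions or the three-term gcd identity $\gcd(da',db',dc')=d\gcd(a',b',c')$, which your argument uses (correctly, and it is standard, but it is one more unstated fact than the paper needs).
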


\begin{proof} Suppose that the triangle $BDP$ is Pythagorean.  We will prove
  that the tringle $APE$ must also be Pythagorean; then so must the triangle $BDP$ be.

Since the triangle $BDP$ is Pythagorean, its three sidelengths, $x,\ a-y$, and
$h_1$ (see Figure 1) must be natural numbers.  From (3i) 

\begin{equation}
\begin{array}{rlll} \Rightarrow & x & = & \dfrac{b\cdot h_1}{c}
  \underset{{\rm by}\ (1)}{=}  \dfrac{d(2mn)}{d(m^2+n^2)} \cdot h_1;\\
\\
& x & = & \dfrac{2mnh_1}{m^2+n^2} \end{array} \label{E4} \end{equation}

From the conditions $(m,n)=1$ and $m+n \equiv 1({\rm mod}\ 2)$, it follows by
Lemma 2(i) that 

\vspace{.15in}

\hspace*{1.75in} $(m^2+n^2, 2mn)=1$ \hfill (4i)

\vspace{.15in}

Since $x$ is a natural number, equation (\ref{E4}) says that the integer
$m^2+n^2$ must be a divisor of the product $2mnh_1$, which clearly implies,
by (4i) and Lemma 1, that $h_1$ must be divisible by $m^2+n^2$.

\vspace{.15in}

\hspace{1.75in} $h_1 = \delta \cdot (m^2+n^2) $ \hfill (4ii)

\vspace{.15in}

\noindent for some positive integer $\delta$;  and since $h_1$ is the length
hypotenuse $\overline{BP}$ (triangle $BDP$), and the point $P$ lies strictly
between $A$ and $B$, it is clear that 

$$
h_1 = \left|\overline{BP}\right| < c = \left|\overline{BA}\right| =
d(m^2+n^2),$$

\noindent which together with (4ii) clearly show that 

\vspace{.15in}

\hspace{1.75in} $\begin{array}{l} 1 \leq \delta < d;\ {\rm or\
    equivalently},\\
\\
1 \leq \delta \leq d-1 \end{array}$ \hfill (4iii)

\vspace{.15in}

Note that by (4iii), we must have $d \geq 2$.  Going back to (\ref{E4}) and
using (4ii) we get

\vspace{.15in}

\hspace{1.75in} $x = (2mn)\delta$  \hfill (4iv)

\vspace{.15in}

and so, by (4iv), (3i), (4ii), and (\ref{E1}), we further obtain

\vspace{.15in}

\hspace{.5in} $\begin{array}{rcl} a-y & = & \delta(m^2-n^2);\ y= a-\delta
  (m^2-n^2);\\ 
\\
y & = & d(m^2-n^2) - \delta(m^2-n^2) = (d-\delta)(m^2-n^2) \end{array}$ \hfill
  (4v)

\vspace{.15in}

By using (1), (3i), and (4v) we also get

$$
b-x=(d-\delta)(2mn)$$

\noindent and 

$$h_2 = (d-\delta)(m^2+n^2).
$$

The proof is complete.  \end{proof}

\begin{theorem}  Let $CBA$ be a Pythagorean triangle, with the 90 degree 
angle at $C$.  Also, let $\left|\overline{CB}\right| = a,\
\left|\overline{CA}\right| = b$ and $\left|\overline{BA}\right| = c$, be 
the three sidelengths so that $a = d(m^2-n^2),\ b=d(2mn), c=d(m^2+n^2)$ where
$m,n,d$ are positive integers such that $m > n,\ (m,n) =1$, and $m+n \equiv
1({\rm mod}\ 2)$.

Let $P$ be a point on the hypotenuse $\left|\overline{BA}\right|$ and strictly
between the end-points $B$ and $A$.

Let $D$ and $E$ be the feet of the perpendiculars from the point $P$ to the
sides $\overline{CB}$ and $\overline{CA}$ respectively.

\begin{enumerate}
\item[(i)]  If $d=1$. i.e., if the Pythagorean triangle $CBA$ is primitive,
  then neither of the right triangles $PDB$ and $PEA$ is Pythagorean.

\item[(ii)]  If $d=2$, and the point $P$ is coincident with the midpoint $M$
  of the hypotenuse $\overline{BA}$, then both triangles $PDB$ and $PEA$  are
  Pythagorean.  Otherwise, if $P \neq M$, neither of these two triangles is
  Pythagorean.

\item[(iii)] If $d=3$, and the point $P$ is such that
  $\dfrac{\left|\overline{PB}\right|}{\left|\overline{PA}\right|} = \dfrac{1}{3}$ or
  $\dfrac{2}{3}$, then both triangles $PDB$ and $PEA$ are Pythagorean.
  Otherwise, if
  $\dfrac{\left|\overline{PB}\right|}{\left|\overline{PA}\right|} \neq
  \dfrac{1}{3},\ \dfrac{2}{3}$, then neither of these triangles are
  Pythagorean.
\end{enumerate}
\end{theorem}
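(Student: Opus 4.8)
The plan is to obtain all three parts as direct specializations of Theorem 1, which already carries the analytic weight. The key is to read Theorem 1 as a \emph{characterization} of the admissible positions of $P$: together with its dichotomy (``either both $BDP$ and $PEA$ are Pythagorean, or neither''), Theorem 1 tells us that both triangles are Pythagorean only if $h_1 = |\overline{BP}| = \delta(m^2+n^2)$ for some integer $\delta$ with $1 \le \delta \le d-1$, i.e. only if $P$ divides the hypotenuse so that $|\overline{BP}| : |\overline{BA}| = \delta : d$.

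First I would record the short converse, which Theorem 1 does not state explicitly: if $P$ is placed so that $h_1 = \delta(m^2+n^2)$ with $\delta$ an integer and $1 \le \delta \le d-1$, then both triangles are Pythagorean. This is one substitution into the similarity relations (3i): with $c = d(m^2+n^2)$ one gets $x = b\,h_1/c = \delta(2mn)$ and $a-y = a\,h_1/c = \delta(m^2-n^2)$, so the three sides $x,\ a-y,\ h_1$ of $BDP$ are positive integers (positivity and the inequality $h_1 < c$ both follow from $1 \le \delta \le d-1$). Being similar to the right triangle $CBA$, the triangle $BDP$ is then Pythagorean, and by Theorem 1 so is $PEA$. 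Combined with the forward implication, this yields the clean equivalence: \emph{both triangles are Pythagorean if and only if $|\overline{BP}| : |\overline{BA}| = \delta : d$ for some integer $\delta$ with $1 \le \delta \le d-1$.}

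Each part now follows by listing the admissible values of $\delta$. In part (i), $d=1$ makes the range $1 \le \delta \le 0$ empty, so no position works; $BDP$ is never Pythagorean, and by the dichotomy neither is $PEA$. In part (ii), $d=2$ leaves only $\delta = 1$, which forces $|\overline{BP}| : |\overline{BA}| = 1:2$, i.e. $P = M$; the converse then gives both triangles Pythagorean at $M$, while the forward implication rules out every other position. In part (iii), $d=3$ admits $\delta \in \{1,2\}$, placing $P$ at $|\overline{BP}| : |\overline{BA}| = 1:3$ and $2:3$ --- exactly the two locations recorded in the statement --- with both triangles Pythagorean there and, by the dichotomy, neither Pythagorean elsewhere.

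I expect no genuine mathematical obstacle; the only care needed is bookkeeping. One must keep straight the correspondence between the parameter $\delta$ of Theorem 1 (which measures $|\overline{BP}|$ in units of $m^2+n^2$) and the position ratios of $P$ along $\overline{BA}$, and must verify in the converse that every admissible $\delta$ yields \emph{positive} side lengths with $0 < h_1 < c$, so that $P$ genuinely lies strictly between $B$ and $A$. Both checks rest only on the similarity relations (3i) and on Lemma 2(i), already invoked in the proof of Theorem 1.
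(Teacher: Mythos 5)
Your proposal is correct and follows essentially the same route as the paper: Theorem 1's constraint $1 \leq \delta \leq d-1$ supplies every ``only if'' direction, and your converse is the same substitution into the similarity relations (3i) that the paper carries out concretely at the special positions --- you merely prove it once, uniformly in $\delta$, instead of verifying the half-size and third-size triangles numerically per case. One remark: in part (iii) you have, correctly, read the admissible positions as $\left|\overline{BP}\right| : \left|\overline{BA}\right| = 1:3$ or $2:3$; the ratio $\left|\overline{PB}\right| / \left|\overline{PA}\right| = \frac{1}{3}$ or $\frac{2}{3}$ appearing in the theorem statement is a typo (literally it would be $\frac{1}{2}$ or $2$), and the paper's own proof computes exactly the positions you use, so your reading matches the intended meaning.
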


\begin{proof} 

\begin{enumerate}\item[(i)]  If $d=1$, then neither of the two
  right triangles, $BDP$ and $PEA$ can be Pythagorean since according to
  Theorem 1, the natural number $\delta$ must
  satisfy $1 \leq \delta \leq d-1$, which is impossible when $d=1$.

\item[(ii)]  Suppose that $d=2$.  

If the point $P$ coincides with the midpoint
  $M$ of the hypotenuse $\overline{BA}$, then each of the triangles $BDP$ and
  $PEA$ is half the size of the triangle $CBA$.  So, by inspection,

$$
\begin{array}{rcl}
\left|\overline{BD}\right| = \left|\overline{PE}\right| & = & \dfrac{a}{2} =
\dfrac{2(m^2-n^2)}{2} = m^2-n^2 \\
\\
\left|\overline{DP}\right| = \left|\overline{EA}\right| & = & \dfrac{b}{2} =
\dfrac{2(2mn)}{2} = 2mn\\
\\
\left|\overline{BP}\right| = \left|\overline{PA}\right| & = & \dfrac{c}{2} =
\dfrac{2(m^2+n^2)}{2} = m^2+n^2,
\end{array}
$$

\noindent which proves that both triangles $BDP$ and $PEA$ are (in fact
primitive) Pythagorean triangles.   Conversely, if both triangles are
Pythagorean, then by Theorem 1, it follows that $1 \leq \delta \leq
d-1=2-1=1,$ \linebreak $ 1 \leq \delta \leq 1,\ \delta = 1$ which establishes that each of
the triangles is half the size of triangle of $CBA$; which implies that $P$ is
the midpoint of $\overline{BA}$.

\item[(iii)] Assume that $d=3$.  

Suppose $\dfrac{\left|\overline{PB}\right|}{\left|\overline{PA}\right|} =
\dfrac{1}{3}$ or $\dfrac{2}{3}$. If
$\dfrac{\left|\overline{PB}\right|}{\left|\overline{PA}\right|} =
\dfrac{1}{3}$, then the triangle $BDP$ is $\dfrac{1}{3}$ the size of triangle
$CBA$ and the triangle $PEA$ is $\dfrac{2}{3}$ the size of $CBA$.  We have,  

$$
\begin{array}{rcll}  \left|\overline{BD}\right| & = & \dfrac{a}{3} =
  \dfrac{3(m^2-n^2)}{3} = m^2-n^2, & \left|\overline{DP}\right| = \dfrac{b}{3}
  = \dfrac{3(2mn)}{3} = 2mn,\\
\\
\left|\overline{PB}\right| & = & \dfrac{c}{3} = \dfrac{3(m^2 + n^2)}{3} =
  m^2+n^2 & \end{array}
$$

\noindent and $\left|\overline{PE}\right| = \dfrac{2a}{3} = 2(m^2-n^2),\
\left|\overline{EA}\right| = \dfrac{2b}{3} = 2(2mn),$ \linebreak $
 \left|\overline{PA}\right| =
\dfrac{2c}{3}=2(m^2+n^2)$.  It is clear that both triangles $BDE$ and $PEA$
are Pythagorean.  

The argument for the case
$\dfrac{\left|\overline{PB}\right|}{\left|\overline{PA}\right|} =
\dfrac{2}{3}$ is similar (we omit the details).  

Now, the converse.  Assume that both triangles, $BDP$ and $PEA$, are
Pythagorean.  Then by Theorem 1 we must have,

$$
1 \leq \delta \leq d-1 = 3-1 = 2;\ \ \delta = 1\ {\rm or}\ 2.
$$

Using the formulas for the sidelengths (of triangles $BDP$ and $PEA$), found
in Theorem 1 we easily see that $\dfrac{\left|\overline{PA}\right|}{\left|\overline{PB}\right|} = \dfrac{1}{3}$, if $\delta = 1$.  While $\dfrac{\left|\overline{PA}\right|}{\left|\overline{PB}\right|}  = \dfrac{2}{3}$, if $\delta = 2$.  The proof is complete. 
\end{enumerate}
\end{proof}

\section{ Three special cases}

\begin{enumerate}
\item[A.]  {\bf Case 1: When the point $P$ is the midpoint $M$ of the
  hypotenuse $\overline{BA}$}

By inspection, it is clear that all six right triangles $BDP,\ PEA,\ CDP$,
$EPD,\ DCE$, and $PEC$ are all congruent and each of them is half the size of
triangle $BCA$.  Clearly then, by (\ref{E1}), these six triangles will be
Pythagorean if and only if the integer $d$ in (\ref{E1}) is even.

\begin{theorem} Let $BCA$ be a Pythagorean triangle with the 90 degree angle
  at $C$ and $\left|\overline{CB}\right| = a = d(m^2-n^2),\
  \left|\overline{CA} \right| = b = d(2mn),$ \linebreak 
$ \left|\overline{BA}\right| = c = d(m^2+n^2)$, where $d,m,n$ are postive
  integers such that $m > n,\ (m,n)=1$ and $m+n\equiv 1({\rm mod}\ 2)$.

Let $M$ be the midpoint of the hypotenuse $\overline{BA}$ and $D, E$ the feet
of the perpendiculars from $M$ to the sides $\overline{CB}$
and $\overline{CA}$ respectively (so $D$ and $E$ are the midpoints of
$\overline{CB}$ and $\overline{CA}$).  Then, the six right angles, $BDM,\
MEA,\ CDM,\ EMD,\ DCE$, and $MEC$ are congruent and have sidelengths as
follows:

\begin{tabular}{lll}
length of horizontal side & $= \dfrac{d}{2}(2mn)= dmn$\\
\\
length of veritical side & $= \dfrac{d(m^2-n^2)}{2}$\\
\\
length of hypotenuse & $= \dfrac{d(m^2+n^2)}{2}$
\end{tabular}

If $d$ is an even natural number, then the above six triangles are
Pythagorean;  otherwise, if $d$ is odd, they are non-Pythagorean.
\end{theorem}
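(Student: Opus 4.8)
The plan is to reduce the entire statement to a single parity observation about the two quantities $m^2-n^2$ and $m^2+n^2$. First I would pin down the geometry by placing $C$ at the origin with the two legs along the coordinate axes, say $A=(b,0)$ and $B=(0,a)$, so that the midpoint of the hypotenuse is $M=(b/2,a/2)$. A direct computation then shows that the foot $D$ of the perpendicular from $M$ to $\overline{CB}$ is $(0,a/2)$ and the foot $E$ of the perpendicular from $M$ to $\overline{CA}$ is $(b/2,0)$; hence $D$ and $E$ are indeed the midpoints of the two legs and $CDME$ is a rectangle of dimensions $\tfrac{a}{2}\times\tfrac{b}{2}$. Reading off the six triangles $BDM,\ MEA,\ CDM,\ EMD,\ DCE,\ MEC$ from this picture, each one has its two legs equal to $\tfrac{a}{2}$ and $\tfrac{b}{2}$ (sides of the rectangle, or the corresponding halves of the legs of $CBA$) and its hypotenuse equal to $\tfrac{c}{2}$ (a diagonal of the rectangle, or half of $\overline{BA}$, all of length $\tfrac{1}{2}\sqrt{a^2+b^2}=\tfrac{c}{2}$); so all six are congruent, which settles the first assertion.

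Next I would substitute the parametric values. The horizontal leg equals $\tfrac{b}{2}=\tfrac{d(2mn)}{2}=dmn$, the vertical leg equals $\tfrac{a}{2}=\tfrac{d(m^2-n^2)}{2}$, and the hypotenuse equals $\tfrac{c}{2}=\tfrac{d(m^2+n^2)}{2}$, matching the table in the statement. The crux is the parity of $2mn$, $m^2-n^2$, and $m^2+n^2$: since $m+n\equiv 1\pmod 2$, the integers $m$ and $n$ have opposite parity, so exactly one of $m^2,n^2$ is even, and therefore both $m^2-n^2$ and $m^2+n^2$ are odd, while $2mn$ is even.

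Finally I would conclude by testing integrality, using that a triangle here is Pythagorean exactly when all three of its sidelengths are integers. The leg $dmn$ is an integer regardless of $d$. For the remaining two sides, $\tfrac{d(m^2-n^2)}{2}$ and $\tfrac{d(m^2+n^2)}{2}$ are integers if and only if $d$ is even, precisely because $m^2-n^2$ and $m^2+n^2$ are odd (an odd multiple of $d$ is divisible by $2$ iff $d$ is). Hence if $d$ is even all three sides are integers and the six congruent triangles are Pythagorean, whereas if $d$ is odd the vertical leg and the hypotenuse are both half-integers, so none of the triangles is Pythagorean. I do not anticipate a genuine obstacle here; the only point demanding care is the explicit use of the different-parity hypothesis $m+n\equiv 1\pmod 2$ to force $m^2\pm n^2$ to be odd, since without it the clean ``if and only if $d$ is even'' dichotomy would break down.
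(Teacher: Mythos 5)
Your proposal is correct and follows essentially the same route as the paper: the six triangles are congruent half-size copies of $CBA$ with sides $\tfrac{a}{2}=\tfrac{d(m^2-n^2)}{2}$, $\tfrac{b}{2}=dmn$, $\tfrac{c}{2}=\tfrac{d(m^2+n^2)}{2}$, and integrality (hence the Pythagorean property) holds exactly when $d$ is even. The paper does this ``by inspection''; you merely make explicit two details it leaves tacit, namely the coordinate verification that $D,E$ are midpoints and the parity observation that $m^2\pm n^2$ are odd because $m+n\equiv 1\ (\mathrm{mod}\ 2)$.
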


\item[B.]  {\bf Case 2:  When the point $P$ is the foot $I$ of the angle
  bisector of the 90$^{\circ}$ angle at $C$}

\parbox[b]{2.0in}{Using the notation of Theorem 1, we have
\smallskip
 $\left|\overline{BD}\right| = a-y,\left|\overline{DI}\right| = x,\\
\\
\left|\overline{BP}\right| = h_1,\ \left|\overline{EA}\right| = b-x,\\
\\
\left|\overline{EI}\right| = y,\ {\rm  and}\  \left|\overline{IA}\right| =
 h_2.$}\hspace{1.25in} \epsfig{file=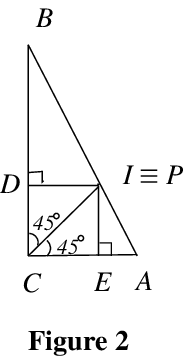,width=1.5in}

\vspace{.15in}

Clearly, we have $x=y$ in this case.  Note that the four
  congruent isosceles right triangles $DCI,\ IEC,\ DCE,\ DIE$ cannot be
  Pythagorean  (no Pythagorean triangle is isosceles).

By Theorem 1, the two right triangles $BDI$ and $IEA$ are either both
Pythagorean or neither of them are.  If they are both Pythagorean, then by
Theorem 1 we have, in particular, $x=\delta(2mn)$  and \linebreak
 $ y=(d-\delta)(m^2-n^2)$
with $m,n,d,\delta$ being positive integers such that $m>n,\ (m,n)=1,\
m+n\equiv1({\rm mod}\ 2)$ and $1 \leq \delta \leq d-1$ (and so $d \geq 2$).

Since $x=y$, we must have

\begin{equation}
\delta (2mn) = (d-\delta)(m^2-n^2) \label{E5}
\end{equation}

By Lemma 2(iii), we know that $(m^2-n^2, 2mn) = 1$.  So, by Lemma 1 and
(\ref{E5}) it follows that $2mn|d-\delta$ and $m^2-n^2 | \delta$ which, in
turn, leads to (when we go back to (\ref{E5}))

\vspace{.15in}

\hspace{1.0in} $\left\{ \begin{array}{l} \delta = K\cdot (m^2-n^2)\\
\\
 d-\delta = K \cdot (2mn),\\
\\
{\rm for\ some\ positive\ integer}\ K.\\
\\
{\rm Hence}\ d=k\cdot (m^2-n^2 + 2mn) \end{array} \right\}$ \hfill (5i)

\vspace{.15in}

\noindent Note that clearly, from (5i), $1 \leq \delta \leq d-1$. In fact, the
smallest possible value of $d$ is $7$; obtained for $K=1$ and $m=2,n=1$.
Moreover, $1\leq \delta \leq d-4$ since the smallest possible value of $K
\cdot (2mn)$ is $4$.

Using (5i) and Theorem 1, one can compute in terms of $m,n$, and $K$.  The
other four sidelengths of the triangles $BDI$ and $IEA$.  Also, by (5i) we get
$x=\delta(2mn) = K(2mn)(m^2-n^2) = y$.  We now state the following theorem.

\begin{theorem}  Let $CBA$ be a Pythagorean triangle with the $90^{\circ}$
  angle at $C$ and sidelengths given by

$$
\left|\overline{CB}\right| = a = d(m^2-n^2),\ \left|\overline{CA}\right| = b =
d(2mn),\ \left|\overline{BA}\right| = c= d(m^2+n^2),
$$

\noindent  where $d,m,n$ are positive integers such that, $m > n,\ m+n \equiv
1({\rm mod}\ 2)$, and $(m,n) = 1$.  Let $I$ be the foot of the perpendicular of
the angle bisector (of the $90^{\circ}$ angle at $C$) to the hypotenuse
$\overline{BA}$.

Also, let $D$ and $E$ be the feet of the perpendiculars from the point $I$ to
the sides $\overline{CB}$ and $\overline{CA}$ respectively.  Then, the two
right triangles $BDI$ and $IEA$ are both Pythagorean precisely when (i.e., if
and only if), $d=K \cdot (m^2-n^2 + 2mn)$ for some integer $K$. If $d= K\cdot
(m^2-n^2 + 2mn)$, then the sidelengths of triangle $BDI$ are given by  $\left|\overline{DI}\right| = x = K \cdot (2mn)(m^2-n^2) ,\ \ 
  h_1 = \left|\overline{BI}\right| = K(m^2-n^2)(m^2+n^2) = K(m^4-n^4)$,

and $\left| \overline{BD}\right| = a-y = K \cdot (m^2-n^2)^2$  and the
sidelengths of triangle $IEA$ are given by 

\vspace{.15in}

$\begin{array}{l} \left|\overline{IE}\right| = y =K(2mn)(m^2-n^2),\\
\\
\left|\overline{EA}\right| = b-x = K(2mn)(2mn) = K \cdot (2mn)^2,
\end{array}
$

and $h_2 = \left|\overline{IA}\right| = K \cdot (2mn)(m^2+n^2)$.

If the integer $d$ is not divisible by $m^2-n^2 + 2mn$, then neither of the
triangles, $BDI$ and $IEA$, is Pythagorean.
\end{theorem}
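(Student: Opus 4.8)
The plan is to reduce the entire statement to the single relation (\ref{E5}) already extracted from the hypothesis, and then to dissect that relation using Lemma 2(iii) together with Euclid's Lemma. The essential geometric input is that $I$ lies on the bisector of the right angle at $C$, hence is equidistant from the two legs; since $D$ and $E$ are the feet of the perpendiculars from $I$ to $\overline{CB}$ and $\overline{CA}$, this equidistance is precisely $|DI| = |IE|$, i.e.\ $x = y$. Equivalently, the angle-bisector theorem gives $h_1/h_2 = a/b = (m^2-n^2)/(2mn)$, which fixes the position of $I$ and therefore pins down the value of the parameter $\delta$ produced by Theorem 1.

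For the forward direction I would assume both $BDI$ and $IEA$ are Pythagorean. Theorem 1 then supplies a positive integer $\delta$ with $1 \leq \delta \leq d-1$ for which $x = \delta(2mn)$ and $y = (d-\delta)(m^2-n^2)$. Imposing $x = y$ is exactly (\ref{E5}), namely $\delta(2mn) = (d-\delta)(m^2-n^2)$. By Lemma 2(iii) we have $(m^2-n^2,\,2mn)=1$, so Euclid's Lemma applied to each side of (\ref{E5}) yields $m^2-n^2 \mid \delta$ and $2mn \mid d-\delta$. Writing $\delta = K_1(m^2-n^2)$ and $d-\delta = K_2(2mn)$ and substituting back into (\ref{E5}) cancels the common factor $(m^2-n^2)(2mn)$ and forces $K_1 = K_2 =: K$. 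Hence $\delta = K(m^2-n^2)$, $d-\delta = K(2mn)$, and adding gives $d = K(m^2-n^2+2mn)$.

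For the converse, assume $d = K(m^2-n^2+2mn)$ and set $\delta = K(m^2-n^2)$. One checks $1 \leq \delta \leq d-1$ (indeed $d-\delta = K(2mn) \geq 4 > 0$, so in fact $\delta \leq d-4$), and that this $\delta$ makes $x = \delta(2mn) = K(2mn)(m^2-n^2)$ coincide with $y = (d-\delta)(m^2-n^2) = K(2mn)(m^2-n^2)$, so the configuration genuinely matches the bisector foot $I$. Theorem 1 then certifies that both triangles are Pythagorean, and substituting $\delta = K(m^2-n^2)$ and $d-\delta = K(2mn)$ directly into its six sidelength formulas produces the displayed values $|BD| = K(m^2-n^2)^2$, $h_1 = K(m^4-n^4)$, $|EA| = K(2mn)^2$, $h_2 = K(2mn)(m^2+n^2)$, and $x = y = K(2mn)(m^2-n^2)$. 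The closing clause is the contrapositive: if $m^2-n^2+2mn \nmid d$ then no admissible $K$ exists, so by the forward direction neither triangle can be Pythagorean.

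The argument is in essence a consolidation of the preamble, so no step is genuinely difficult; the one point deserving care is the identification $K_1 = K_2$, which I would obtain by cancellation after substitution rather than by any estimate. A second subtlety worth a sentence is that in the forward direction $\delta$ is \emph{not} free to choose: it is determined by the geometry of the bisector. One must therefore note that the $\delta$ of Theorem 1 necessarily equals the geometric value, which holds because $h_1/h_2 = a/b$ forces $\delta/(d-\delta) = (m^2-n^2)/(2mn)$, and this is again precisely (\ref{E5}).
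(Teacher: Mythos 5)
Your proof is correct and follows essentially the same route as the paper: the bisector condition $x=y$ combined with Theorem 1 yields $\delta(2mn)=(d-\delta)(m^2-n^2)$ (the paper's equation (\ref{E5})), and Lemma 2(iii) together with Euclid's Lemma forces $\delta=K(m^2-n^2)$, $d-\delta=K(2mn)$, hence $d=K(m^2-n^2+2mn)$, after which all six sidelengths drop out of Theorem 1 exactly as you compute them. If anything, you are slightly more thorough than the paper, which leaves implicit both the converse direction and the point you rightly flag --- that the chosen $\delta$ really places $P$ at the bisector foot $I$, via the uniqueness of the point on the hypotenuse with $x=y$.
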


\item[C.] {\bf Case 3:  When the point $P$ is the foot $F$ of the
  perpendicular from the vertex $C$ to the hypotenuse $\overline{BA}$}

\vspace{.15in}

\parbox[b]{2.0in}{In this part, instead of using Theorem 1, we will first
  compute the sidelengths of the triangles $BDF,\ FEA$, and the four congruent
  triangles $FDC$, $DFE,\ DCE$,  and $CFE$ in terms of (the sidelengths) $a,b,c$.  After that we will implement the formulas in (\ref{E1}) in order to express the above sidelengths in terms of the integers $d,m,n$.} \hspace{1.0in} \epsfig{file=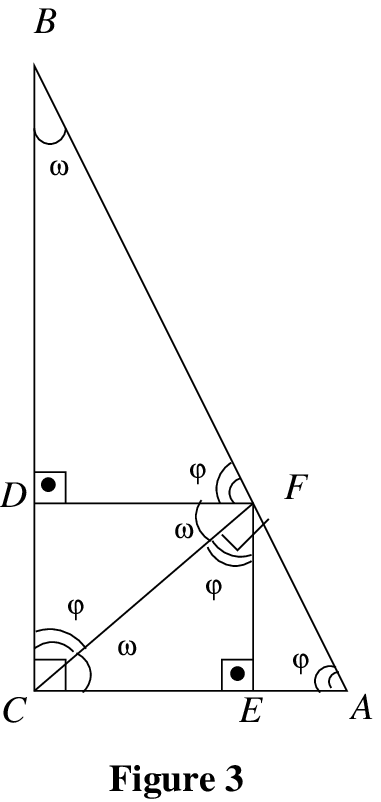,width=1.5in}

After that we will implement Lemma 2 to be able to draw the conclusions which
will lead to Theorem 5.  Note that since $F$ is the foot of the perpendicular
from $C$ to the hypotenuse $\overline{BA}$, the aforementioned six right
triangles are all similr to the triangle $CBA$.  Let $\omega$ and $\varphi$ be
the degree measures of the angles $\angle CBA$ and $\angle CAB$ respectively
(see Figure 3).

We have (and we set)

\begin{equation}
\left\{ \begin{array}{l}
\left|\overline{CB}\right| = a,\ \left|\overline{CA}\right| = b,\
\left|\overline{BA}\right| = c\\
\\
\left|\overline{DF}\right| = \left|\overline{CE} \right| = x,\
\left|\overline{EA} \right| = b-x \\
\\
\left|\overline{DC}\right| = \left|\overline{FE}\right| = y,\
\left|\overline{BD}\right| = a-y\\
\\
\left|\overline{BF} \right| = h_1,\  \left|\overline{FA} \right|=h_2,\ \left|
\overline{CF}\right| = \left|\overline{DE} \right|=h\end{array} \right\} \label{E6}
\end{equation}

Furthermore, 

$$
\sin \omega = \dfrac{y}{h} = \cos\varphi = \dfrac{b}{c} \ {\rm and}\ \cos
\omega = \dfrac{h}{b} = \dfrac{a}{c};
$$

\noindent and thus $h= \dfrac{ab}{c}$, which implies $y = h \cdot \cos \varphi
= h \cdot \dfrac{b}{c} = \dfrac{ab}{c} \cdot \dfrac{b}{c} =
\dfrac{ab^2}{c^2}$.  So, $a-y = a-\dfrac{ab^2}{c^2} = \dfrac{a(c^2-b^2)}{c^2}
= ({\rm since}\ c^2=a^2+b^2) \dfrac{a\cdot a^2}{c^2};\ a-y = \dfrac{a^3}{c^2}$

Next we calculate the lengths $x$ and $b-x$.  We have $\tan \omega = \cot
\varphi = \dfrac{y}{x}$; $\cot \omega = \tan \varphi = \dfrac{x}{y}$, and
$\tan \varphi = \dfrac{a}{b}$ which gives $\dfrac{x}{y} = \dfrac{a}{b}, x =
\dfrac{a}{b} \cdot y$.  Since $y = \dfrac{ab^2}{c^2}$ (see above), we obtain
$x = \dfrac{a}{b} \cdot \dfrac{ab^2}{c^2} = \dfrac{ba^2}{c^2}$.  From this we
get $b-x = b- \dfrac{ba^2}{c^2} = \dfrac{b(c^2-a^2)}{c^2} = \dfrac{b\cdot b^2}{c^2}= \dfrac{b^3}{c^2}$, since $c^2-a^2=b^2$.

Also, $\sin \omega = \dfrac{x}{h_1}; \ h_1 = \dfrac{1}{\sin \omega} \cdot x =
\dfrac{c}{b} \cdot \dfrac{ba^2}{c^2} = \dfrac{a^2}{c}$.  Similarly, we have
$\sin \varphi = \dfrac{y}{h_2};\ h_2 = \dfrac{1}{\sin \varphi} \cdot y =
\dfrac{c}{a} \cdot \dfrac{ab^2}{c^2} = \dfrac{b^2}{c}$.  We summarize these
lengths as follows:

\vspace{.15in}

\noindent {\it Sidelengths of triangle $BDF$}

\vspace{.15in}

\noindent $\left( \left|\overline{BD}\right| = a-y = \dfrac{a^3}{c^2},\ \left|
\overline{DF}\right| = x = \dfrac{ba^2}{c^2},\ \left| \overline{BF}\right| =
h_1 = \dfrac{a^2}{c} \right)$ \hfill (6i)

\newpage

\noindent{\it Sidelengths of triangle $FEA$}

\vspace{.15in}

\noindent$\left( \left|\overline{FE}\right| = y = \dfrac{ab^2}{c^2},\ \left|
\overline{EA}\right| = b-x = \dfrac{b^3}{c^2},\ \left| \overline{FA}\right| =
h_2 = \dfrac{b^2}{c} \right)$ \hfill (6ii)

\vspace{.15in}

\noindent {\it Sidelengths of the four congruent triangles $FDC,\ DFE,\ DCE,\
  CFE$}

\noindent $\begin{array}{rcl}\left( \left|\overline{DC}\right|\right. & = &
  \left|\overline{FE}\right| = y = \dfrac{ab^2}{c^2},\ \left|\overline{DF}\right| = \left|\overline{CE}\right|\\
\\
&  =& \left. x = \dfrac{ba^2}{c^2},\ \left|\overline{CF}\right| =
\left|\overline{DE}\right| = \dfrac{ab}{c} = h\right)\end{array}$ \hfill (6iii)

\vspace{.15in}

Next, we combine the length formulas in (6i), (6ii), and (6iii) with the
formulas in (\ref{E1}), since $CBA$ is a Pythagorean triangle, to obtain the
following.

\setcounter{equation}{6}

\begin{equation} \left\{ \begin{array}{rcl} a-y & = & \dfrac{d\cdot
      (m^2-n^2)^3}{(m^2+n^2)^2},\ x = \dfrac{d\cdot (m^2-n^2)^2
      \cdot(2mn)}{(m^2+n^2)^2} \\ 
\\
y & = & \dfrac{d\cdot (m^2-n^2) \cdot (2mn)^2}{(m^2+n^2)^2},\ b-x= \dfrac{d
      \cdot (2mn)^3}{(m^2+n^2)^2} \\
\\
h_1 & = & \dfrac{d\cdot (m^2-n^2)^2}{m^2+n^2},\ h_2 = \dfrac{d\cdot
      (2mn)^2}{m^2+n^2}\\
\\
h & =&  \dfrac{d\cdot (2mn)\cdot (m^2-n^2)}{m^2+n^2} \end{array}\right\}\label{E7}
\end{equation}

The following lemma from number theory is well-known and comes in handy.

\begin{lemma} Let $i_1,\ i_2,\ i_3,\ e_1,\ e_2, \ e_3$ be positive integers
  such that  $(i_1,i_2) = 1 = (i_1, i_3)$.  Then,

\begin{enumerate}
\item[(a)]  $\left( i^{e_1}_1,\ i^{e_2}_2\right) = 1$

\item[(b)]  $\left( i^{e_1}_1,\ i^{e_2}_2 \cdot i^{e_3}_3 \right) = 1$
\end{enumerate}

It follows from Lemmas 2 and 3 that 

\hspace*{-.5in}\begin{equation}
\left\{ \begin{array}{l}
\left(\left( m^2+n^2\right)^2, \ \left(m^2-n^2 \right)^3\right)=1,\\
\\ 
\left(\left( m^2+n^2 \right)^2, \ \left(2mn\right)^2\right) = 1\\
\\
\left( m^2+n^2,\ \left(m^2- n^2\right)^2\right) = 1,\\
\\
 \left(m^2+n^2,\ \left(2mn\right)^2 \right) = 1\\
\\
\left(m^2+n^2,\ \left(2mn\right) \cdot \left(m^2-n^2\right)\right) = 1 \\
\\
\left(\left(m^2+n^2\right)^2,\ \left(m^2-n^2\right)^2 \cdot \left(2mn\right)
\right) = 1\\
\\
\left(\left(m^2+n^2\right)^2,\ \left(m^2-n^2\right) \cdot \left(2mn\right)^2
\right) = 1 
\end{array} \right\} \label{E8} \end{equation}
\end{lemma}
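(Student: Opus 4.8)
The plan is to prove both parts through the characterization that two positive integers are coprime precisely when they have no common prime divisor, using Euclid's Lemma (Lemma 1) to descend from prime powers and products to the bases. First, for part (a), I would argue by contradiction: suppose a prime $p$ divides both $i_1^{e_1}$ and $i_2^{e_2}$. Since $p \mid i_1^{e_1} = i_1 \cdot i_1^{e_1 - 1}$, repeated application of Euclid's Lemma (equivalently, the standard fact that a prime dividing a power divides the base) forces $p \mid i_1$; likewise $p \mid i_2$. Then $p$ is a common divisor of $i_1$ and $i_2$, contradicting $(i_1, i_2) = 1$, so no such $p$ exists and $(i_1^{e_1}, i_2^{e_2}) = 1$. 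One could instead induct on $e_1 + e_2$, peeling off one factor at a time with Lemma 1, but the prime-divisor phrasing is cleaner.

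Next, for part (b), I would reduce to part (a). Applying (a) twice already gives $(i_1^{e_1}, i_2^{e_2}) = 1$ and $(i_1^{e_1}, i_3^{e_3}) = 1$, so it suffices to show that an integer coprime to each of two integers is coprime to their product. Again by contradiction: if a prime $p$ divided both $i_1^{e_1}$ and $i_2^{e_2} i_3^{e_3}$, then $p \mid i_1$ as above, and since $p$ is prime it divides one of $i_2^{e_2}$, $i_3^{e_3}$, hence one of $i_2$, $i_3$ — contradicting the corresponding hypothesis $(i_1, i_2) = 1$ or $(i_1, i_3) = 1$.

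Finally, the seven coprimality statements collected in (\ref{E8}) follow by specializing (a) and (b) to $i_1 = m^2 + n^2$, $i_2 = m^2 - n^2$, $i_3 = 2mn$, with the three base coprimalities $(m^2+n^2, m^2-n^2) = 1$, $(m^2+n^2, 2mn) = 1$, and $(m^2-n^2, 2mn)=1$ supplied by Lemma 2. For example, the first line is (a) with $(e_1, e_2) = (2,3)$, the third and fourth lines are (a) with $e_1 = 1$, the fifth line is (b) with $e_1 = 1$, and the last two lines are (b) with $e_1 = 2$ and the appropriate exponents on $i_2$ and $i_3$. Each is a single substitution.

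There is no deep obstacle here: the whole statement is a routine consequence of unique factorization. The only point deserving a moment's care is justifying the step ``a prime dividing a power divides the base'' strictly from the tools the paper has introduced, namely Euclid's Lemma; the induction / repeated-Euclid phrasing handles this cleanly, so I would make that descent explicit rather than appealing to unique factorization directly.
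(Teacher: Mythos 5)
Your proof is correct, but it is worth noting that the paper itself offers no proof of this lemma at all: it is introduced with the remark that it ``is well-known and comes in handy,'' and the bibliography simply points the reader to Sierpinski's \emph{Elementary Theory of Numbers} (page 15) for a proof; likewise the seven coprimality statements in the display labelled (8) are asserted with only the phrase ``It follows from Lemmas 2 and 3.'' So your write-up supplies genuine content the paper omits. Your route is the natural one and is self-contained within the paper's toolkit: the contradiction argument via a common prime divisor, with the descent ``$p \mid i_1^{e_1}$ implies $p \mid i_1$'' justified by repeated application of Lemma 1 (taking $c = p$, so that $p \nmid i_1$ gives $(p, i_1) = 1$ and Euclid's Lemma peels off one factor of $i_1$ at a time), exactly parallels the style of the paper's own proof of Lemma 2(i), so the addition reads as if it belonged there. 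Your reduction of (b) to (a) plus ``coprime to each factor implies coprime to the product'' is sound, since for a prime $p$ the condition $p \nmid i_2^{e_2}$ already yields $(p, i_2^{e_2}) = 1$, letting Lemma 1 split $p \mid i_2^{e_2} \cdot i_3^{e_3}$ into the two cases you handle. Finally, your accounting of the substitutions $i_1 = m^2+n^2$, $i_2 = m^2-n^2$, $i_3 = 2mn$ with the base coprimalities from Lemma 2(i)--(iii) correctly covers all seven lines of (8), including the two product cases with $e_1 = 2$; this is precisely the bookkeeping the paper leaves implicit. The only cost of your approach relative to the paper's citation is length; what it buys is a fully self-contained argument relying on nothing beyond Lemma 1, which the careful reader would otherwise have to fetch from the references.
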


A careful look at formulas (\ref{E7}) and the coprimeness conditions in
(\ref{E8}), in conjunction with Lemma 1, reveals that either all six
triangles, $BDF,\ FEA,\ FDC,\ DFE,\ DCE$, and $CFE$ are Pythagorean; or none
of them are.

They are all Pythagorean precisely (i.e., if and only if) the integer $d$ is
divisible by $(m^2+n^2)^2$, i.e., when 

\begin{equation}
\left\{  \begin{array}{l} d= K \cdot (m^2+n^2)^2 \\
\\
{\rm for\ some\ positive\ integer}\ K\end{array}\right\} \label{E9} 
\end{equation}

This is precisely when all seven numbers $y,\ a-y,\ x,\ b-x,\ h_1,\ h_2$, and
$h$ are integers.  When (\ref{E9}) holds true, we can compute, via (\ref{E7})
  and (61), (6ii), and (6iii) all the sidelengths in terms of the integers
  $m,n$, and $K$.

We have the following theorem.

\begin{theorem}  Let $CBA$ be a Pythagorean triangle, with the $90$-degree
  angle at $C$.  With $\left|\overline{CB}\right| = a= d(m^2-n^2),\
  \left|\overline{CA}\right| = b = d(2mn),\ \left|\overline{BA}\right| =
  d(m^2+n^2) = c$, where $d,m,n$ are positive integers such that $m > n,\
  (m,n)=1$, and $m+n \equiv 1({\rm mod} 2)$.  

Also, let $F$ be the foot of the perpendicular from the vertex $C$ to the
hypotenuse $\overline{BA}$.  Then, the six similar triangles $BDF,\ FEA$, (and
the four congruent ones) $FDC,\ DFE,\ DCE,\ CFE$ are either all Pythagorean or
none of them are.  They are all Pythagorean precisely when (i.e., if and only
if)  $d=K\cdot (m^2+n^2)^2$, for some positive integer $K$.  When $d$
satisfies the said condition, the sidelengths of the above six triangles are
given by the following formulas.  

\vspace{.15in}

\noindent For triangle $BDF$

\vspace{.15in}

\noindent $\left| \overline{BD}\right| = a-y = K \cdot \left(m^2-n^2\right),\
\left|\overline{DF} \right| = x = K \cdot \left( m^2 - n^2\right)^2 \cdot
(2mn), \ {\rm and}\ h_1 = K \cdot \left(m^2+n^2\right) \cdot
\left(m^2-n^2\right)^2$

\vspace{.15in}

\noindent For triangle $FEA$:

\vspace{.15in}

$\left| \overline{FE}\right| = y = K \cdot \left(m^2-n^2\right) \cdot (2mn)^2,\
\left|\overline{EA} \right| = b -x = K\cdot (2mn)^3$,  and $h_2 = K \cdot
\left(m^2+n^2\right) \cdot (2mn)^2$.

\vspace{.15in}

\noindent For the four congruent triangles $FDC,\ DFE,\ DCE, CFE$:

\vspace{.15in}

$\left|\overline{DC} \right| = \left|\overline{FE}\right| = y = K \cdot
\left(m^2-n^2\right) \cdot \left(2mn\right)^2$,

\vspace{.15in}

$\left|\overline{DF} \right| = \left| \overline{CE} \right| = x = K \cdot
\left( 2mn\right) \cdot \left(m^2 - n^2\right)^2,$

\noindent  and 

\vspace{.15in}

\noindent $h= \left|\overline{CF}\right| = \left| \overline{DE}\right| =
K\cdot (2mn) \cdot \left(m^2-n^2\right) \cdot \left(m^2 + n^2\right) = K \cdot
(2mn)\left(m^4-n^4\right)$.
\end{theorem}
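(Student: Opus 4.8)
The plan is to extract the entire result from the explicit length formulas already assembled in~(\ref{E7}), combined with the coprimeness relations in~(\ref{E8}) and Euclid's Lemma (Lemma~1). Each of the seven quantities $a-y,\ x,\ y,\ b-x,\ h_1,\ h_2,\ h$ appears in~(\ref{E7}) as a positive rational whose denominator is a power of $m^2+n^2$ (either $(m^2+n^2)^2$ or $m^2+n^2$) and whose numerator is $d$ times a product of powers of $m^2-n^2$ and $2mn$. Such a rational is an integer precisely when its denominator divides its numerator; and since, by~(\ref{E8}), the denominator is coprime to the whole numerator-product apart from the factor $d$, Euclid's Lemma collapses each integrality requirement into a divisibility condition on $d$ alone. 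Concretely, for $a-y$ one applies Lemma~1 with $c=(m^2+n^2)^2$, $a=(m^2-n^2)^3$, $b=d$, and so on for the remaining six.

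I would then run through the seven quantities. For the four lengths carrying the denominator $(m^2+n^2)^2$ --- namely $a-y,\ x,\ y,\ b-x$ --- the relevant numerator-products are $(m^2-n^2)^3$, $(m^2-n^2)^2(2mn)$, $(m^2-n^2)(2mn)^2$, and $(2mn)^3$, each coprime to $(m^2+n^2)^2$ by~(\ref{E8}) (equivalently, directly by Lemma~3), so integrality forces $(m^2+n^2)^2\mid d$. For the three lengths carrying the denominator $m^2+n^2$ --- namely $h_1,\ h_2,\ h$ --- the same reasoning forces only the weaker $(m^2+n^2)\mid d$. Hence the single strongest requirement is $(m^2+n^2)^2\mid d$, and, crucially, this one requirement simultaneously governs the integrality of all three sidelengths of each of the six triangles. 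This yields both halves of the statement at once: the six triangles are all Pythagorean or none are, and they are all Pythagorean exactly when $(m^2+n^2)^2\mid d$, i.e.\ when $d=K(m^2+n^2)^2$ for a positive integer $K$, which is~(\ref{E9}).

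For the converse and the closed-form sidelengths I would substitute $d=K(m^2+n^2)^2$ back into~(\ref{E7}) and cancel the denominators, which now divide cleanly; one reads off the stated expressions in $K,m,n$ for triangles $BDF$ and $FEA$ and for the four congruent triangles. As a sanity check I would verify the Pythagorean identities $(a-y)^2+x^2=h_1^2$, $y^2+(b-x)^2=h_2^2$, and $x^2+y^2=h^2$ on the resulting integer triples, confirming that these are genuine Pythagorean right triangles and not merely triangles with integer sides.

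The argument has essentially no obstacle once~(\ref{E8}) is granted: all the arithmetic content has been front-loaded into Lemmas~2 and~3. The only point demanding a little care is that the \emph{square} $(m^2+n^2)^2$, not merely $m^2+n^2$, occupies four of the denominators, so one genuinely needs coprimeness with the square rather than with $m^2+n^2$ itself. This is exactly why Lemma~3 is stated with arbitrary exponents, and it is the reason the governing condition turns out to be divisibility of $d$ by $(m^2+n^2)^2$ rather than by $m^2+n^2$.
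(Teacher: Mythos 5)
Your proposal is correct and takes essentially the same route as the paper: the paper likewise reads the integrality of the seven lengths in (\ref{E7}) against the coprimeness conditions (\ref{E8}) and Euclid's Lemma (Lemma 1) to conclude $d = K\cdot(m^2+n^2)^2$ as in (\ref{E9}), noting that the squared denominator $(m^2+n^2)^2$ occurs among the sides of every one of the six triangles (which gives the all-or-none dichotomy), and then substitutes back into (\ref{E7}) to obtain the stated sidelengths. Your closing verification of the Pythagorean identities is harmless but unnecessary, since the six triangles are right triangles by construction and integer sides already make them Pythagorean.
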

\end{enumerate}

\noindent {\bf Numerical Examples}

If we take $K=1$ and $mn \leq 4$, then $K = 1$ and $m=2,\ n=1$; or $K=1$ and $m =
4, n=1$.

\begin{enumerate}
\item[(a)]  $K=1,\ m=2,\ n=1$.  We obtain the following:  

\vspace{.15in}

$\begin{array}{l} d=1 \cdot \left(2^2+1^2\right)^2 = 5^2 = 25, \ h=60,\ h_1 = 45,\
  h_2 = 80,\\
\\
y= 48,\ a-y = 75-48 = 27,\ x = 36,\ b-x = 100 - 36 = 64,\\
\\
a = 75,\ b = 100,\ c = 125
\end{array}
$

\item[(b)]  $K = 1, \ m=4,\ n=1$.  We have the following:

\vspace{.15in}

$\begin{array}{l}
d=289,\ a-y = 15,\ x=1800,\ h_1 = 3825,\\
\\
y = 960,\ b-x = 512,\ h_2 = 1088,\ h = 1404\\
\\
a = 4335,\ b= 2312,\ c = 4913
\end{array}
$
\end{enumerate}

\section{Exactly $(d-1)$ positions of $P$}

Given a Pythagorean triangle $CBA$, as in Figure 1, and with the point $P$ on
the hypotenuse $\overline{BA}$, and $D$ and $E$ being the perpendicular
projections of $P$ on the sides $\overline{CB}$ and $\overline{CA}$
respectively.  We know from Theorem 1 that either both triangles $BDP$ and
$PEA$ are Pythagorean, or neither of them are.  The integer $\delta$, as
described in Theorem 1 must satisfy $1 \leq \delta \leq d-1$; which means that
$d\geq 2$ is a necessary condition.  There are $(d-1)$ choices for $\delta$.
If we subdivide the hypotenuse $\overline{BA}$ into $d$ equal length segments,
each segment having length $m^2+n^2$, it is easily seen that for each such
position of the point $P$ both triangles $BDP$ and $PEA$ are Pythagorean.
There are exactly $(d-1)$ such positions for the
point $P$ along the hypotenuse $\overline{BA}$.  These are the points $P_1,
\ldots , P_{d-1}$; so that each of the consecutive line segments
$\overline{BP}_1,\overline{P_1P_2} , \ldots , \overline{P_{d-1}A}$ (exactly
$d$ line segments)  has length $m^2+n^2$.

We postulate the following theorem.

\begin{theorem}
Let $CBA$ be a Pythagorean triangle with the $90^{\circ}$ angle at the vertex
$C$.  With sidelengths given by $\left| \overline{CB}\right| = a =
d(m^2-n^2)$, \linebreak $\left|\overline{CA}\right| = b = d(2mn),\
\left|\overline{BA}\right| = d(m^2+n^2)$, where $d,m,n$ are positive integers
such that $d\geq 2,\ m > n,\ (m,n)=1$, and $m+n \equiv ({\rm mod}\ 2)$.  Also,
let $P_1,\ldots, P_{d-1}$ be the $(d-1)$ points on the hypotenuse
$\overline{BA}$ such that the $d$ consecutive line segments $\overline{BP}_1,\
\overline{P_1P_2}, \ldots , \overline{P_{d-1}A}$ have equal lengths; each
having length $m^2 + n^2$.  Then there are exactly $(d-1)$ points $P$ on the
hypotenuse $\overline{BA}$ such that both trangles $BDP$ and $PEA$ are
Pythagorean where $D$ and $E$ are the feet of the perpendiculars from $P$ to
the sides $\overline{CB}$ and $\overline{CA}$ respectively.  These $(d-1)$
points are precisely the points $P_1, \ldots ,P_{d-1}$ described above.
Furthermore, each pair of Pythagorean triangles $BD_iP_i$  and $P_iE_iA$ have
sidelengths given by $\left|\overline{BD}_i\right| = i \cdot
\left(m^2-n^2\right),\ \left|\overline{D_iP_i}\right| = i(2mn),\
\left|\overline{BP_i}\right| = i\left(m^2+n^2\right),\ \left|
\overline{P_iE_i}\right| = (d-i)\left(m^2-n^2\right),\ \left|\overline{E_iA}
\right| = (d-i)(2mn),\ \left|\overline{P_iA}\right| =
(d-i)\left(m^2+n^2\right)$, for $i=1,\ldots , d-1$; and where $D_i$ and $E_i$
are the perpendicular projections of the point $P_i$ onto the sides
$\overline{CB}$ and $\overline{CA}$ respectively.
\end{theorem}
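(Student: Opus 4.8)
The plan is to reduce everything to Theorem~1, which already pins down the possible shapes of the two sub-triangles, and then to combine this with the elementary observation that a point on the hypotenuse is uniquely determined by its distance from $B$. Concretely, I would first record that as $P$ ranges over the open segment $\overline{BA}$, the length $h_1=|\overline{BP}|$ is a strictly increasing function taking every value in $(0,c)=(0,d(m^2+n^2))$ exactly once; hence specifying $h_1$ is the same as specifying $P$. This monotonicity is what will ultimately convert the finitely many admissible values of $\delta$ from Theorem~1 into finitely many admissible points.

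For the forward (necessity) direction, I would suppose that $P$ is a position for which both $BDP$ and $PEA$ are Pythagorean. Theorem~1 then supplies a positive integer $\delta$ with $1\le\delta\le d-1$ and, in particular, $h_1=|\overline{BP}|=\delta(m^2+n^2)$. Thus the distance from $B$ to any admissible $P$ must be one of the $d-1$ values $(m^2+n^2),\,2(m^2+n^2),\,\dots,\,(d-1)(m^2+n^2)$, and since distinct values of $\delta$ give distinct (strictly increasing) values of $h_1$, these correspond to $d-1$ distinct points. By the equal-partition definition of $P_1,\dots,P_{d-1}$ we have $|\overline{BP_i}|=i(m^2+n^2)$, so the admissible point attached to $\delta$ is exactly $P_\delta$. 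This shows there are at most $d-1$ admissible positions and that each of them is one of $P_1,\dots,P_{d-1}$.

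For the reverse (sufficiency) direction, I would verify that each $P_i$ actually works by a direct similarity computation. Because $|\overline{BP_i}|=i(m^2+n^2)$ while $|\overline{BA}|=d(m^2+n^2)$, the right triangle $BD_iP_i$ is similar to $CBA$ with ratio $i/d$, so its three sides are $\tfrac{i}{d}\,a=i(m^2-n^2)$, $\tfrac{i}{d}\,b=i(2mn)$, and $\tfrac{i}{d}\,c=i(m^2+n^2)$, all integers; hence $BD_iP_i$ is Pythagorean. Applying the same similarity argument to $P_iE_iA$, whose ratio to $CBA$ is $(d-i)/d$, yields the integer sidelengths $(d-i)(m^2-n^2)$, $(d-i)(2mn)$, $(d-i)(m^2+n^2)$, so $P_iE_iA$ is Pythagorean as well (consistent with Theorem~1). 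Reading off these values gives exactly the six sidelength formulas claimed in the statement.

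Combining the two directions shows that the admissible positions are precisely $P_1,\dots,P_{d-1}$, that there are exactly $d-1$ of them, and that the sidelengths are as stated. I do not expect any genuine obstacle here: the arithmetic heart of the result already resides in Theorem~1, and the geometry is a single similarity. The only point requiring care is the bookkeeping that matches the index $\delta$ of Theorem~1 with the index $i$ of the equal partition—one must confirm that $h_1=\delta(m^2+n^2)$ places $P$ at the $\delta$-th division point counted from $B$, so that the formulas of Theorem~1 (with $\delta=i$) coincide with those asserted here, and that the map $\delta\mapsto P_\delta$ is a bijection onto the admissible set, which follows from the strict monotonicity of $h_1$ noted at the outset.
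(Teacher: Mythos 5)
Your proposal is correct and takes essentially the same route as the paper: Section 5 derives necessity from Theorem 1 (which forces $h_1=\delta\left(m^2+n^2\right)$ with $1\leq\delta\leq d-1$) and gets sufficiency from the equal subdivision of the hypotenuse into $d$ segments of length $m^2+n^2$, exactly as you do. The paper in fact only sketches this and ``postulates'' the theorem without a formal proof, so your explicit similarity computation with ratios $i/d$ and $(d-i)/d$, together with the monotonicity bookkeeping identifying the value $\delta$ with the $\delta$-th division point from $B$, merely fills in details the paper leaves implicit.
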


\section{Other cases}

In this section, we explore the following question.  If in addition to the two
triangles in Figure 1, $BDP$ and $PEA$ being Pythagorean, we require that the
four congruent triangles $DCE,\ PEC,\ CDP,\ EPD$, also be Pythagorean.  What
are the necessary and sufficient conditions for this to occur?  

For these four congruent triangles to be Pythagorean, the integers\linebreak  $x=
\left|\overline{DP}\right| = \left|\overline{CE}\right|$ and $y=
\left|\overline{DC}\right| = \left|\overline{PE}\right|$ must satisfy the
condition,

$$
x^2+y^2 = \ {\rm perfect\ square}.
$$

\noindent Combining this with Theorem 6 leads to the following theorem.

\begin{theorem}
Let $CBA$ be a Pythagorean triangle with the $90$ degree angle at the vertex
$C$; and with sidelengths, $a=\left|\overline{CB}\right| =
d\left(m^2-n^2\right)$, $b = \left|\overline{CA}\right| = d(2mn),\ c =
\left|\overline{BA} \right| = d\left(m^2+n^2\right)$ where $d,m,n$ are
positive integers such that $d\geq 2,\ m>n,\ (m,n)=1$, and $m+n \equiv 1({\rm
  mod}\ 2)$.  Let $P$ be a point on the hypotenuse $\overline{BA}$, and $D$ and
$E$ be the feet of the perpendiculars from the point $P$ onto the sides
$\overline{CB}$ and $\overline{CA}$ respectively.  Also, let  $x=\left|
\overline{DP}\right| = \left|\overline{CD}\right|,\ y =
\left|\overline{DC}\right| = \left|\overline{PE}\right|$ so that 

$$
a-y = \left|\overline{BD}\right|\ {\rm and}\ b-x =
\left|\overline{EA}\right|.$$

\noindent Then, the two right triangles $BDP$ and $PEA$, as well as the four
congruent triangles, $DCE,\ PEC,\ CDP,\ EPD$, are all (six triangles) are
Pythagorean if and only if there exist positive integers $D,M,N$ such that 

$$
M > N,\ (M,N)=1,\ M+N \equiv 1({\rm mod}\ 2)
$$

\noindent and with either

\vspace{.15in}

$\left\{ \begin{array}{l} y = \delta \left(m^2-n^2\right) = D \cdot \left(
  M^2-N^2\right)\\
\\
x = (d-\delta) \cdot (2mn) = D\cdot (2MN)\\
\\
\delta\ {\rm a\ positive\ integer\ such \ that\ } 1 \leq \delta \leq
  d-1\end{array}\right\}$ \hfill (10i)

\vspace{.15in}

\noindent or

\vspace{.15in}

$\left\{ \begin{array}{l} y= \delta \left(m^2-n^2\right) = D \cdot (2MN)\\
\\
x = (d-\delta)(2mn) = D\cdot \left(M^2-N^2\right) \\
\\
\delta\ {\rm a \ positive\ integer\ such \ that\ } 1 \leq \delta \leq
d-1\end{array}\right\}$ \hfill (10ii)
\end{theorem}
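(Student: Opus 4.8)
The plan is to reduce the whole biconditional to two facts already in hand: Theorem 1, which characterizes exactly when the pair $BDP,\ PEA$ is Pythagorean, and the parametric description (\ref{E1}) of an arbitrary Pythagorean triple. The key observation is geometric: the four congruent triangles $DCE,\ PEC,\ CDP,\ EPD$ are right triangles whose two legs are precisely $x=\left|\overline{DP}\right|$ and $y=\left|\overline{PE}\right|$, with common hypotenuse the diagonal $\left|\overline{CP}\right|=\left|\overline{DE}\right|=\sqrt{x^2+y^2}$. Hence, once $x$ and $y$ are known to be positive integers, these four triangles are Pythagorean if and only if $x^2+y^2$ is a perfect square, i.e. if and only if $(x,y)$ is a pair of legs of some Pythagorean triple. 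This splits the ``all six Pythagorean'' condition into ``$BDP,\ PEA$ Pythagorean'' (handled by Theorem 1) together with ``$x^2+y^2$ a perfect square'' (handled by (\ref{E1})).

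First I would prove the forward implication. If all six triangles are Pythagorean, then $BDP$ and $PEA$ in particular are, so Theorem 1 supplies a positive integer $\delta$ with $1\le\delta\le d-1$ and $y=\delta(m^2-n^2),\ x=(d-\delta)(2mn)$; this is Theorem 1 read with the harmless relabeling $\delta\mapsto d-\delta$ forced by the roles of $x$ and $y$ in the present statement. Since the four congruent triangles are Pythagorean, $x^2+y^2=z^2$ for some positive integer $z$, so $(x,y,z)$ is a Pythagorean triple. Applying (\ref{E1}) to this triple, and recalling that its two legs may be interchanged, yields positive integers $D,M,N$ with $M>N$, $(M,N)=1$, $M+N\equiv1({\rm mod}\ 2)$ such that either $x=D(M^2-N^2),\ y=D(2MN)$ or $x=D(2MN),\ y=D(M^2-N^2)$. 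Substituting the values $y=\delta(m^2-n^2)$ and $x=(d-\delta)(2mn)$ into these two alternatives produces exactly (10i) and (10ii).

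For the converse I would run the argument in reverse. Given $D,M,N$ and either (10i) or (10ii), the equalities $y=\delta(m^2-n^2)$ and $x=(d-\delta)(2mn)$ with $1\le\delta\le d-1$ give $a-y=(d-\delta)(m^2-n^2)$ and $b-x=\delta(2mn)$, and the similarity ratios (3i), (3ii) then force $h_1=(d-\delta)(m^2+n^2)$ and $h_2=\delta(m^2+n^2)$; all of these sidelengths are positive integers, so $BDP$ and $PEA$ are Pythagorean. Meanwhile (10i) or (10ii) already writes $x$ and $y$ in the parametric shape of (\ref{E1}), whence $x^2+y^2=D^2(M^2+N^2)^2$ is a perfect square and the four congruent triangles are Pythagorean as well. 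Thus all six triangles are Pythagorean.

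I expect the care to lie in two bookkeeping points rather than in any deep step. First, Theorem 1 is stated only as the implication ``both Pythagorean $\Rightarrow$ the $\delta$-formulas,'' so when proving the ``if'' direction I must supply and justify its easy converse, which is the integrality computation of $h_1,h_2$ above. Second, the appearance of two cases (10i) and (10ii) is not an artifact: it is exactly the pair of legitimate assignments of $\{x,y\}$ to $\{D(M^2-N^2),\ D(2MN)\}$ sanctioned by the ``without loss of generality'' clause of (\ref{E1}), and I would make explicit that nothing singles out one assignment, so both must genuinely be retained. Beyond keeping these two parametrizations (the $(d,m,n,\delta)$ one from Theorem 1 and the $(D,M,N)$ one for the triple $(x,y,z)$) aligned, no substantive obstacle arises.
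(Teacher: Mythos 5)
Your proposal is correct and takes essentially the same route as the paper, which proves Theorem 7 by exactly this reduction: the four congruent triangles are Pythagorean precisely when the integers $x,y$ satisfy $x^2+y^2=$ perfect square, and combining this with Theorem 1 (equivalently Theorem 6) and the parametrization (\ref{E1}) of the triple $(x,y,z)$, with the two legs interchangeable, yields the two alternatives (10i) and (10ii). Your two bookkeeping points --- the relabeling $\delta \mapsto d-\delta$ and the explicit integrality computation of $h_1, h_2$ in the converse direction --- are details the paper leaves implicit, and you handle them correctly.
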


\vspace{.15in}

The following example shows that there exist nonprimitive Pythagorean
triangles such that there is no point $P$ on the hypotenuse $\overline{BA}$
such that all six triangles $BDP,\ PEA,\ DCE,\ PEC,\ CDP,\ EPD$, are
Pythagorean.

\vspace{.15in}

\noindent{\bf Example:}  Take $d=5,\ m=2,\ n=1$.  Then the sidelengths of
triangle $CBA$ are $a=5\cdot \left(2^2-1^2\right) = 15,\ b = 5 \cdot (2\cdot 2
\cdot 1)=20$, and $c= 5\cdot \left(2^2+1^1\right) = 25$.  The possible values
of the integer $\delta$ are $\delta = 1,2, \ldots , d-1 = 1,2,3,4$.  Using the
formulas $y=\delta \left(m^2-n^2\right)$ and $x = (d-\delta)(2mn)$ we have the
following.

\begin{enumerate}
\item[1.]  $\delta = 1:\ y=3,\ x=(5-1)\cdot 4 = 16$

\noindent and $y^2+x^2 = 9 + 256 = 265$ not an integer square.

\item[2.] $\delta = 2,\ y=2\cdot 3=6,\ x= (5-2) \cdot 4 = 12$

\noindent and $y^2+x^2 = 36 + 144 = 180$, not a perfect square.

\item[3.]  $\delta = 3,\ y= 3\cdot 3 = 9,\ x= (5-3)\cdot 4 = 8$

\noindent and $y^2+x^2 = 81 + 64 = 145$, not an integer square.\

\item[4.] $\delta = 4,\ y=4\cdot 3 = 12,\ x=(5-4)\cdot 4 =4$

\noindent and $y^2+x^2 = 144 + 16 = 160$, not a perfect square.
\end{enumerate}

There are many ways in which one can use the conditions (10i) or (10ii) of
Theorem 7 in order to produce families of Pythagorean triangles such that
each member  (of those families) has the property that there is a point $P$ on
its hypotenuse such that all six triangles (as described in Theorem 7) are
Pythagorean.  We produce such a family.

\vspace{.15in}

\noindent {\bf Family 1:}  Consider (10i):

\vspace{.15in}

\hspace{.5in} $\left.\begin{array}{rcl} y & = & \delta \left(m^2-n^2\right)
  =D\left(M^2-N^2 \right)\\
\\
x & = & (d-\delta)(2mn) = D(2MN) \end{array}\right\}$ \hfill (10i)

\vspace{.15in}

Let $K$ be a positive integer.

Take $D= K \cdot mn\left(m^2-n^2\right)$.

\vspace{.15in}

From the second equation in (10i) we obtain

$$
d-\delta = K\cdot MN\left(m^2-n^2\right) $$

\noindent and from the first equation in (10i) we get 

$$
\delta = Kmn\left(M^2 -N^2\right).
$$

\noindent Hence, $d = \delta + KMN\left(m^2-n^2\right) = K \cdot \left[
  mn\left(M^2 - N^2\right) + MN \left(m^2-n^2\right) \right]$.  Obviously $1
  \leq \delta \leq d - 1$ and $d\geq 2$, as required. We have the following.

\begin{center}

{\bf Family 1}

\end{center}

{\it Let $m,n,M,N$ be positive integrs such that $m > n,\ (m,n)=1$, \linebreak
  $m+n\ \equiv({\rm mod}\ 2),\ M>N,\ (M,N) = 1\ M+N \equiv 1({\rm mod}\ 2)$.  Also, let
  $K$ be a positive integer and $\delta = Kmn \left(M^2-N^2\right),\ d= K\cdot
  \left[ mn\left(M^2-N^2\right) + MN \left(m^2-n^2\right)\right]$.  Consider
  the Pythagorean triangle $CBA$ with sidelengths $\left|\overline{CB}\right|
  = a = d \left(m^2-n^2\right),$\linebreak $ \left| \overline{CA}\right| = b = d(2mn),\
  \left|\overline{BA}\right| = c = d\left(m^2 + n^2\right)$.  Let $P$ be the
  point on the hypotenuse $\left|\overline{BA}\right|$ such that
  $\left|\overline{BP}\right| = h_1 = \delta \left(m^2 + n^2 \right)$; and let
  $D$ and $E$ be the perpendicular projections of $P$ onto the sides
  $\overline{CB}$ and $\overline{CA}$ respectively.  Then all six right
  triangles $BDP,\ PEA,\ DCE,\ PEC,\ CDP$ and $EPD$ are Pythagorean.}

\end{document}